\newtheorem{precor}{{\bf Corollary}}
\newtheorem{precon}{{\bf Conjecture}}
\newtheorem{prealphcon}{{\bf Conjecture}}
\newtheorem{predefin}{{\bf Definition}}
\newtheorem{preexm}{{\bf Example}}
\newtheorem{preappl}{{\bf Application}}
\newtheorem{prelem}{{\bf Lemma}}
\newtheorem{preproof}{{\bf Proof.\ }}
\newenvironment{proof}[1]{\begin{preproof}{\rm
               #1}\hfill{$\blacksquare$}}{\end{preproof}}
\newtheorem{pretheorem}{{\bf Theorem}}
\newenvironment{theorem}{\begin{pretheorem}{\hspace{-0.5
               em}{\bf.\ }}}{\end{pretheorem}}
\newtheorem{prealphtheorem}{{\bf Theorem}}
\newtheorem{prealphlem}{{\bf Lemma}}
\newtheorem{prepro}{{\bf Proposition}}
\newtheorem{preprb}{{\bf Problem}}
\newtheorem{prerem}{{\bf Remark}}
\newtheorem{preapp}{{\bf Application}}
\newtheorem{prequ}{{\bf Question}}
\def\conct[#1,#2]{\mbox {${#1} \leftrightarrow {#2}$}}
\def\dconct[#1,#2]{\mbox {${#1} \rightarrow {#2}$}}
\def\deg[#1,#2]{\mbox {$d_{_{#1}}(#2)$}}
\def\mindeg[#1]{\mbox {$\delta_{_{#1}}$}}
\def\maxdeg[#1]{\mbox {$\Delta_{_{#1}}$}}
\def\outdeg[#1,#2]{\mbox {$d_{_{#1}}^{^+}(#2)$}}
\def\minoutdeg[#1]{\mbox {$\delta_{_{#1}}^{^+}$}}
\def\maxoutdeg[#1]{\mbox {$\Delta_{_{#1}}^{^+}$}}
\def\indeg[#1,#2]{\mbox {$d_{_{#1}}^{^-}(#2)$}}
\def\minindeg[#1]{\mbox {$\delta_{_{#1}}^{^-}$}}
\def\maxindeg[#1]{\mbox {$\Delta_{_{#1}}^{^-}$}}
\def\dre[#1,#2,#3]{\mbox {${\cal E}^{^{#3}}(#1,#2)$}}
\def\var[#1,#2]{\mbox {${\rm Var}_{_{#1}}(#2)$}}
\def\ls[#1]{\mbox {$\xi^{^{#1}}$}}
\def\hom[#1,#2]{\mbox {${\rm Hom}({#1},{#2})$}}
\def\onvhom[#1,#2]{\mbox {${\rm Hom^{v}}(#1,#2)$}}
\def\onehom[#1,#2]{\mbox {${\rm Hom^{e}}(#1,#2)$}}
\def\core[#1]{\mbox {$#1^{^{\bullet}}$}}
\def\cay[#1,#2]{\mbox {${\rm Cay}({#1},{#2})$}}
\def\sch[#1,#2,#3]{\mbox {${\rm Sch}({#1},{#2},{#3})$}}
\def\cays[#1,#2]{\mbox {${\rm Cay_{s}}({#1},{#2})$}}
\def\dirc[#1]{\mbox {$\stackrel{\rightarrow}{C}_{_{#1}}$}}
\def\cycl[#1]{\mbox {${\bf Z}_{_{#1}}$}}
\begin{document}
%\setcounter{page}{183}
%{\footnotesize AAA {\bf ?} (200?) ?--?}\\
%\maketitle

\begin{center} 
{\Large \bf On Local Antimagic Chromatic Number of Graphs}\\
\vspace{0.3 cm}
{\bf Saeed Shaebani}\\
{\it School of Mathematics and Computer Science}\\
{\it Damghan University}\\
{\it P.O. Box {\rm 36716-41167}, Damghan, Iran}\\
{\tt shaebani@du.ac.ir}\\ \ \\
\end{center}
\begin{abstract}
\noindent A {\it local antimagic labeling} of a connected graph $G$ with at least
three vertices, is a bijection $f:E(G) \rightarrow \{1,2,\ldots , |E(G)|\}$ such
that for any two adjacent vertices $u$ and $v$ of $G$, the condition 
$\omega _{f}(u) \neq \omega _{f}(v)$ holds; where
$\omega _{f}(u)=\sum _{x\in N(u)} f(xu)$. Assigning $\omega _{f}(u)$ to
$u$ for each vertex $u$ in $V(G)$, induces naturally a proper vertex coloring
of $G$; and $|f|$ denotes the number of colors appearing in this proper vertex coloring.
The {\it local antimagic chromatic number} of $G$, denoted by $\chi _{la}(G)$, is defined
as the minimum of $|f|$, where $f$ ranges over all local antimagic labelings of $G$.
In this paper, we explicitely construct an infinite class 
of connected graphs $G$ such that $\chi _{la}(G)$
can be arbitrarily large while $\chi _{la}(G \vee \bar{K_{2}})=3$, where $G \vee \bar{K_{2}}$
is the join graph of $G$ and the complement graph of $K_{2}$. This fact leads to a
counterexample to a theorem of [Local antimagic vertex 
coloring of a graph, {\em Graphs and Combinatorics}\ {\bf
33} (2017), 275--285].
\\

\noindent {\bf Keywords:}\ {Antimagic labeling, Local antimagic labeling, Local antimagic chromatic number.}\\

\noindent {\bf Mathematics Subject Classification: 05C78, 05C15}
\end{abstract}
%%%%%%%%%%%%%%%%%%%%%%%%%%%%%%%%%%%%%%%%%%%%%%%%%%%%%%%%%%%%%%%%%%%%%%%
%%%%%%%%%%%%%%%%%%%%%%%%%%%%%%%%%%%%%%%%%%%%%%%%%%%%%%%%%%%%%%%%%%%%%%%%
\section{Introduction}

Unless otherwise stated we consider connected finite simple graphs that have at least
three vertices. Let $G$ be a graph and $f:E(G) \rightarrow \{1,2,\ldots , |E(G)|\}$ be a bijection.
For each vertex $u$ in $V(G)$, we mean by $\omega _{f} (u)$ as the sum of the labels
of all incident edges to $u$; more precisely, $\omega _{f}(u)=\sum _{x\in N(u)} f(xu)$.
Whenever there is no ambiguity on $f$, we use 
the symbol $\omega (u)$ instead of $\omega _{f}(u)$.

Let $G$ be a graph and $f:E(G) \rightarrow \{1,2,\ldots , |E(G)|\}$ be a bijection.
If $\omega _{f} (u)  \neq \omega _{f} (v)$ for any two distinct vertices $u$ and $v$ in $V(G)$,
then $f$ is called an {\it antimagic labeling} of $G$ \cite{Harts}. 
Hartsfield and Ringel  conjectured that every connected graph with at
least three vertices admits an antimagic labeling \cite{Harts}. By several authors, this conjecture was
shown to be true for some special classes of graphs, but it is still widely unsolved.
The important fact about this conjecture is that it is unsolved even for
trees; see \cite{Bens} for an interesting discussion in this topic.

In 2017, Arumugam, Premalatha, Ba{\v{c}}a, and 
Semani{\v{c}}ov{\'a}-Fe{\v{n}}ov{\v{c}}{\'i}kov{\'a} in  \cite{Aru}, and independently, 
Bensmail, Senhaji, and Lyngsie in \cite{Bens}, posed a new definition as a relaxation of
the notion of antimagic labeling. They called a bijection 
$f:E(G) \rightarrow \{1,2,\ldots , |E(G)|\}$
a {\it local antimagic labeling} of $G$ if 
for any two adjacent vertices $u$ and $v$ in $V(G)$, the condition
$\omega_{f} (u) \neq \omega_{f} (v)$ holds. They conjectured that every 
connected graph with at least three vertices admits a local antimagic labeling.
This conjecture was solved partially in \cite{Bens}. A few months later, 
Haslegrave proved this conjecture by means of probabilistic tools \cite{Hasl}.

Based on the notion of local antimagic labeling,
Arumugam, Premalatha, Ba{\v{c}}a, and 
Semani{\v{c}}ov{\'a}-Fe{\v{n}}ov{\v{c}}{\'i}kov{\'a}
introduced a new graph coloring parameter.
Let $G$ be a connected graph with at least three vertices, and 
$f:E(G) \rightarrow \{1,2,\ldots , |E(G)|\}$
be a local antimagic labeling of $G$. For any two adjacent vertices 
$u$ and $v$ we have
$\omega_{f} (u) \neq \omega_{f} (v)$; so, 
assigning $\omega _{f}(u)$ to
$u$ for each vertex $u$ in $V(G)$, induces naturally a proper vertex coloring
of $G$ which is called a 
{\it local antimagic vertex coloring} of $G$. 
Let $|f|$ denote the number of colors appearing in this proper vertex coloring.
More precisely, $|f|=|\{\omega_{f} (u) : u\in V(G)\}|$.
The {\it local antimagic chromatic number} of $G$, denoted by $\chi _{la}(G)$, is defined
as the minimum of $|f|$, where $f$ ranges over 
all local antimagic labelings of $G$  \cite{Aru}.

Let $G_{1}$ and $G_{2}$ be two vertex disjoint graphs.
The {\it join graph} of $G_{1}$ and $G_{2}$, denoted by
$G_{1} \vee G_{2}$, is the graph whose vertex set is
$V(G_{1}) \cup V(G_{2})$ and its edge set equals
$E(G_{1}) \cup E(G_{2}) \cup \{ab :\ a\in V(G_{1})\ {\rm and}\ b\in V(G_{2})\}$.

The Theorem $2.16$ of \cite{Aru} asserts that if a graph $G$
has at least four vertices, then
$\chi _{la}(G) + 1 \leq \chi _{la}(G \vee \bar{K_{2}})$, where $\bar{K_{2}}$
is the complement graph of a complete graph with two vertices.
In this paper, we show that the mentioned theorem is incorrect.
In this regard, we explicitely construct an infinite class 
of connected graphs $G$ such that $\chi _{la}(G)$
can be arbitrarily large and $\chi _{la}(G \vee \bar{K_{2}})=3$.

\section{The main result}

This section is devoted to construct an infinite class 
of connected graphs $G$ such that $\chi _{la}(G)$
can be arbitrarily large while $\chi _{la}(G \vee \bar{K_{2}})=3$.
Our procedure is to consider the complete bipartite graph $K_{1,n}$ that satisfies
$\chi _{la}(K_{1,n})=n+1$ for each positive integer $n \geq 2$. We show that
if $n$ is odd and $n+1$ is not divisible by $3$, then
$\chi _{la}(K_{1,n} \vee \bar{K_{2}})=3$.

\begin{theorem}{Let $n$ be an odd integer such that $n+1$ is not divisible by $3$.
Then, the join of the star graph $K_{1,n}$ and the complement of $K_{2}$, say
$H:=K_{1,n} \vee \bar{K_{2}}$, satisfies $\chi _{la}(H)=3$.
}
\end{theorem}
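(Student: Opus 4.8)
The plan is to work with the explicit structure of $H$. Write $c$ for the centre of the star $K_{1,n}$, let $\ell_1,\dots,\ell_n$ be its leaves, and let $a,b$ be the two vertices of $\bar{K_2}$. Then $H$ has $n+3$ vertices and $|E(H)|=3n+2$ edges, namely the $n$ star edges $c\ell_i$ together with the join edges $ca,cb$ and $\ell_i a,\ell_i b$ ($1\le i\le n$); here $\deg(c)=n+2$, $\deg(\ell_i)=3$ and $\deg(a)=\deg(b)=n+1$. For the lower bound I first note that any local antimagic vertex colouring is in particular a proper colouring, so $|f|\ge\chi(H)$; since $\{c,a,\ell_1\}$ induces a triangle we have $\chi(H)\ge 3$, while the partition $(\{\ell_1,\dots,\ell_n\},\{c\},\{a,b\})$ into independent sets shows $\chi(H)=3$. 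Hence $\chi_{la}(H)\ge 3$, and it remains to exhibit a local antimagic labeling $f$ with $|f|=3$.

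For the upper bound the target is the colouring suggested by this $3$-colouring: I want a bijection $f:E(H)\to\{1,\dots,3n+2\}$ for which every leaf receives a common weight $W_1=\omega_f(\ell_i)$, the centre receives $W_2=\omega_f(c)$, and $a,b$ receive a common weight $W_3=\omega_f(a)=\omega_f(b)$, with $W_1,W_2,W_3$ pairwise distinct. Summing $\omega_f$ over all vertices gives $nW_1+W_2+2W_3=2\sum_{k=1}^{3n+2}k=(3n+2)(3n+3)$, and writing $T=\sum_i f(c\ell_i)$ one finds $nW_1=\big(\sum_k k\big)-f(ca)-f(cb)$ and $2W_3=\big(\sum_k k\big)-T$. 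The construction is therefore as follows. First assign two labels to $ca,cb$ so that the remaining $3n$ labels sum to a multiple of $n$; the natural choice $f(ca),f(cb)\in\{3n+1,3n+2\}$ forces $W_1=\frac{3(3n+1)}{2}$, which is where the hypothesis that $n$ is odd enters, since it makes $W_1$ an integer. Next I partition the remaining labels $\{1,\dots,3n\}$ into $n$ triples each of sum $W_1$ — such equal-sum triple systems exist precisely because $n$ is odd — and let the three labels of the $i$-th triple be the labels of the edges $c\ell_i,\ell_i a,\ell_i b$, which makes every leaf weight equal to $W_1$ automatically.

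It then remains to orient each triple, that is, to decide within the $i$-th triple which of its two non-star labels is placed on $\ell_i a$ and which on $\ell_i b$, so that $\omega_f(a)=\omega_f(b)$. Each such choice changes $\omega_f(a)-\omega_f(b)$ by $\pm(v_i-w_i)$, where $v_i,w_i$ are the two non-star labels of triple $i$, so achieving $\omega_f(a)-\omega_f(b)=f(cb)-f(ca)$ reduces to solving a signed-sum equation; here the fact that $n$ is odd, so that $a$ and $b$ each meet the even number $n+1$ of join edges, is what makes the two sides balanceable. I expect this balancing step, carried out simultaneously with a bookkeeping of the value $T$, to be the main obstacle, since $T$ must be steered away from the single value making $W_2=W_3$ (one checks that $W_2>W_1$ and $W_3>W_1$ hold automatically for this choice of $f(ca),f(cb)$, so only $W_2\ne W_3$ is genuinely at issue). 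The remaining hypothesis that $3\nmid(n+1)$ serves to pin $n$ into the residue classes $1,3\pmod 6$, for which the equal-sum triples and their orientation can be written down by a single uniform formula and the resulting $W_1,W_2,W_3$ verified distinct by a routine computation; producing that explicit, case-free formula — rather than merely invoking the existence of the triple system — is the part of the argument that will require the most care.
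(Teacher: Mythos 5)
Your lower bound is fine and matches the paper ($\chi_{la}(H)\ge\chi(H)\ge 3$ via a triangle), and your global bookkeeping identities for $W_1,W_2,W_3$ are correct. But the upper bound, which is the entire content of the theorem, is left as a plan rather than a proof, and the part you defer is exactly the hard part. You need (i) an explicit partition of $\{1,\dots,3n\}$ into $n$ triples of equal sum $\tfrac{3(3n+1)}{2}$, (ii) a designation, within each triple, of which label goes on the star edge $c\ell_i$, and (iii) an orientation of the remaining pair $\{v_i,w_i\}$ onto $\ell_i a$ versus $\ell_i b$ so that $\sum_i \epsilon_i(v_i-w_i)=f(cb)-f(ca)=\pm 1$, all while keeping $T=\sum_i f(c\ell_i)$ away from the one value forcing $W_2=W_3$. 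Step (iii) is not guaranteed by parity considerations alone: a signed sum $\sum_i\epsilon_i d_i$ of arbitrary positive differences $d_i$ need not attain $\pm 1$ even when it has the right parity (e.g.\ $d=(1,10)$ only reaches $\pm 9,\pm 11$), so ``the two sides are balanceable because $n+1$ is even'' is not an argument; the triples and the designation must be chosen jointly with the orientation. You acknowledge this yourself (``the part of the argument that will require the most care''), which means the proof is not complete. For comparison, the paper's construction is organized differently --- it places $1,\dots,n$ on the star edges and $\{n+1,n+2\}$ on $vx,vy$ (so the leaf weight is $\tfrac{9n+11}{2}$ rather than your $\tfrac{9n+3}{2}$), pairs $\{n+3,\dots,3n+2\}$ into explicit two-element sets $A_i$ of constant sum, and then spends four residue-class cases on precisely the orientation problem (iii); those several pages are the proof, and they have no counterpart in your write-up.

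One further caution: in your scheme the role of the hypothesis $3\nmid(n+1)$ is unclear (you only speculate that it pins $n$ into residue classes where a uniform formula exists), whereas in the paper it is structurally load-bearing --- it both rules out the degenerate coincidence of two weights and makes the case analysis exhaustive (Cases 3 and 4 cover $n\equiv 0$ and $n\equiv 1\pmod 3$). If your completed construction never used that hypothesis you would be proving a strictly stronger statement, which should raise a flag that either a constraint is being missed or the claim needs independent verification for $n\equiv 2\pmod 3$. Also handle $n=1$ separately or note that your inequality $W_3>W_1$ degenerates to $W_3\ge W_1$ there.
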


\begin{proof}{
Let the vertex set of the star graph $K_{1,n}$ be $\{v,v_{1},v_{2}, \ldots ,v_{n}\}$ and $v$
be its central vertex. Also, let $x$ and $y$ be the two vertices of $\bar{K_{2}}$.
Since $H$ has some triangles, we have $\chi _{la}(H) \geq \chi (H) \geq 3$.
So, for proving $\chi _{la}(H)=3$, it suffices to provide a local antimagic labeling
of $H$ that induces a local antimagic vertex coloring using exactly three colors.

For $n=1$, define $f: E(H) \rightarrow \{1,2,3,4,5\}$ by
$$\begin{array}{lcr}
 f(vv_{1})=1,  &  f(vx)=5, & f(vy)=4, \\
 f(v_{1}x)=2,  &  f(v_{1}y)=3. & 
\end{array}$$
In this case, we have
$$\begin{array}{lcr}
 \omega (v)=10,  &  \omega (v_{1})=6, & \omega (x)=\omega (y)=7. 
\end{array}$$
Therefore, $f$ is a local antimagic labeling
of $H$ that induces a local antimagic vertex coloring using exactly three colors.

For $n \geq 3$, the aim is to construct a local antimagic labeling
$f: E(H) \rightarrow \{1,2,3, \ldots , 3n+2\}$ such that
$\omega (v_{1})=\omega (v_{2})=\cdots =\omega (v_{n})$
and $\omega (x)=\omega (y)$.
In this regard, we first assign $f(vv_{i})=i$ for each $i$ in $\{1,2,\ldots ,n\}$.
Also, in our construction, $\{f(vx),f(vy)\}=\{n+1,n+2\}$.
Therefore,
\begin{center}
$\omega (v)=\sum\limits _{i=1}^{n+2} i=\frac{(n+2)(n+3)}{2}.$
\end{center}
Also, we must have
\begin{center}
$\omega (x)=\omega (y)=\frac{1}{2}\sum\limits _{i=n+1}^{3n+2} i=\frac{(n+1)(4n+3)}{2},$
\end{center}
and
\begin{center}
$\omega (v_{1})=\omega (v_{2})=\cdots =\omega (v_{n})=\frac{9n+11}{2}.$
\end{center}
This shows that since $n+1$ is not divisible by $3$, the desired $f$
will be a local antimagic labeling
of $H$ and it induces a local antimagic vertex coloring of $H$ with three colors.
We make a partition $\{A_{1},A_{2},\ldots, A_{n}\}$ of the set
$\{n+3,n+4,\ldots, 3n+2\}$ such that for each $i$ in 
$\{1,2,\ldots ,n\}$, the set $A_{i}$ has two elements and
$A_{i}=\{f(v_{i}x),f(v_{i}y)\}$.
Also, $A_{i}$ has one element in 
$\{n+3,n+4,\ldots, 2n+2\}$ and one element in 
$\{2n+3,2n+4,\ldots, 3n+2\}$. In this regard, our suitable partition is as
the following;
$$A_{i}=\left\{\begin{array}{lcrc}
 \{2n+4-2i\ ,\ \frac{5n+3}{2}+i\}  &    &   \ {\rm if} \ & 1\leq i \leq \frac{n+1}{2} \\
   &    &  & \\
 \{3n+4-2i\ ,\ \frac{3n+3}{2}+i\}  &    &    \ {\rm if} \ &  \frac{n+3}{2} \leq i \leq n. 
\end{array}\right.$$
It is obvious that for each $i$ in $\{1,2,\ldots ,n\}$, we have
\begin{center}
$\omega (v_{i})= i+f(v_{i}x)+f(v_{i}y)=\frac{9n+11}{2}$.
\end{center}
Accordingly, the following $n+1$ sets
\begin{center}
$\{f(vx),f(vy)\},\{f(v_{1}x),f(v_{1}y)\},\ldots ,\{f(v_{n}x),f(v_{n}y)\}$
\end{center}
are determined.
For completing the proof, it is sufficient to determine the exact values of each of
$f(vx),f(vy),f(v_{1}x),f(v_{1}y),\ldots ,f(v_{n}x),f(v_{n}y)$, 
in such a way that
$\omega (x)=\omega (y)$. In this regard, we consider the following four cases.
\\

\noindent \textbf{Case 1.} The case that $\frac{n+1}{2}\stackrel{4}{\equiv} 0$.

\noindent First we determine $f(v_{i}x)$ and $f(v_{i}y)$ for each $i$ in $\{1,2,\ldots ,\frac{n+1}{2}\}$; as follows.

$$f(v_{i}x)=\left\{\begin{array}{ccrccccc}
 \frac{5n+3}{2}+i  &    &   \ {\rm if} \ &   1\leq i \leq \frac{n+1}{2}  &  {\rm and}    &   (i\stackrel{4}{\equiv} 1 & {\rm or} & i\stackrel{4}{\equiv} 0)\\
   &    &  &      &  &    &  & \\
 2n+4-2i  &    &   \ {\rm if} \ &   1\leq i \leq \frac{n+1}{2}  &   {\rm and}   &  (i\stackrel{4}{\equiv} 2 & {\rm or} & i\stackrel{4}{\equiv} 3)\\
\end{array}\right.$$
and
$$f(v_{i}y)=\left\{\begin{array}{ccrccccc}
 2n+4-2i  &    &   \ {\rm if} \ &   1\leq i \leq \frac{n+1}{2}  &   {\rm and}   & (i\stackrel{4}{\equiv} 1 & {\rm or} & i\stackrel{4}{\equiv} 0)\\ 
   &    &  &        &   &   &  & \\
   \frac{5n+3}{2}+i  &    &   \ {\rm if} \ &   1\leq i \leq \frac{n+1}{2}  &   {\rm and}   & (i\stackrel{4}{\equiv} 2 & {\rm or} & i\stackrel{4}{\equiv} 3).\\
\end{array}\right.$$

\noindent If $i$ is a positive integer such that $i\stackrel{4}{\equiv} 1$ and $i+4 \leq \frac{n+1}{2}$,
then
\begin{center}
$\sum\limits_{j=i}^{i+3} f(v_{j}x)=9n+8-2i$
\end{center}
and
\begin{center}
$\sum\limits_{j=i}^{i+3} f(v_{j}y)=9n+8-2i$.
\end{center}
This shows that since $\frac{n+1}{2}$ is divisible by $4$, we have
\begin{center}
$\sum\limits_{j=1}^{\frac{n+1}{2}} f(v_{j}x)=\sum\limits_{j=1}^{\frac{n+1}{2}} f(v_{j}y)$.
\end{center}

\noindent Now, we put $f(vx)=n+2$ and $f(vy)=n+1$. Also, for each $i$ in
$\left\{\frac{n+3}{2},\frac{n+5}{2},\frac{n+7}{2}\right\}$
put
$$f(v_{i}x)=\left\{\begin{array}{ccrc}
 \frac{3n+3}{2}+i  &    &   \ {\rm if} \ &   i \in \left\{\frac{n+3}{2},\frac{n+5}{2}\right\}\\
   &    &  & \\
 3n+4-2i  &    &   \ {\rm if} \ &   i = \frac{n+7}{2}  \\
\end{array}\right.$$
and
$$f(v_{i}y)=\left\{\begin{array}{ccrc}
 3n+4-2i  &    &   \ {\rm if} \ &   i \in \left\{\frac{n+3}{2},\frac{n+5}{2}\right\}\\
   &    &  & \\
 \frac{3n+3}{2}+i  &    &   \ {\rm if} \ &   i = \frac{n+7}{2}.  \\
\end{array}\right.$$
We have
\begin{center}
$f(vx)+\sum\limits_{j=\frac{n+3}{2}}^{\frac{n+7}{2}} f(v_{j}x)=f(vy)+\sum\limits_{j=\frac{n+3}{2}}^{\frac{n+7}{2}} f(v_{j}y).$
\end{center}
Therefore, 
\begin{center}
$f(vx)+\sum\limits_{j=1}^{\frac{n+7}{2}} f(v_{j}x)=f(vy)+\sum\limits_{j=1}^{\frac{n+7}{2}} f(v_{j}y).$
\end{center}
Now, it is turn to determine the exact values of
\begin{center}
$f\left(xv_{\frac{n+9}{2}}\right),f\left(yv_{\frac{n+9}{2}}\right),
f\left(xv_{\frac{n+11}{2}}\right),f\left(yv_{\frac{n+11}{2}}\right),
\ldots ,f(xv_{n}),f(yv_{n})$.
\end{center}
Consider the following assignments;

$$f(v_{i}x)=\left\{\begin{array}{ccrccccc}
 \frac{3n+3}{2}+i  &    &   \ {\rm if} \ &   \frac{n+9}{2}\leq i \leq n  &  {\rm and}    &   (i\stackrel{4}{\equiv} 0 & {\rm or} & i\stackrel{4}{\equiv} 3)\\
   &    &  &      &  &    &  & \\
 3n+4-2i  &    &   \ {\rm if} \ &   \frac{n+9}{2}\leq i \leq n  &   {\rm and}   &  (i\stackrel{4}{\equiv} 1 & {\rm or} & i\stackrel{4}{\equiv} 2)\\
\end{array}\right.$$
and
$$f(v_{i}y)=\left\{\begin{array}{ccrccccc}
 3n+4-2i  &    &   \ {\rm if} \ &   \frac{n+9}{2}\leq i \leq n  &  {\rm and}    &   (i\stackrel{4}{\equiv} 0 & {\rm or} & i\stackrel{4}{\equiv} 3)\\
   &    &  &      &  &    &  & \\
 \frac{3n+3}{2}+i  &    &   \ {\rm if} \ &   \frac{n+9}{2}\leq i \leq n  &   {\rm and}   &  (i\stackrel{4}{\equiv} 1 & {\rm or} & i\stackrel{4}{\equiv} 2).\\
\end{array}\right.$$

\noindent Since $\frac{n+1}{2}$ is divisible by $4$, the number of vertices in $\left\{v_{i}|\ \frac{n+9}{2}\leq i \leq n\right\}$
is divisible by $4$. Also, $\frac{n+9}{2}\stackrel{4}{\equiv} 0$.
Now, if $\{i,i+1,i+2,i+3\} \subseteq \left\{\frac{n+9}{2},\frac{n+11}{2}, \ldots , n-1,n\right\}$
and $i\stackrel{4}{\equiv}0$, we have
\begin{center}
$\sum\limits_{j=i}^{i+3} f(v_{j}x)=\sum\limits_{j=i}^{i+3} f(v_{j}y)$.
\end{center}
Accordingly, 
\begin{center}
$\sum\limits_{j=\frac{n+9}{2}}^{n} f(v_{j}x)=\sum\limits_{j=\frac{n+9}{2}}^{n} f(v_{j}y)$.
\end{center}
We conclude that
\begin{center}
$f(vx)+\sum\limits_{j=1}^{n} f(v_{j}x)=f(vy)+\sum\limits_{j=1}^{n} f(v_{j}y)$;
\end{center}
and the proof is completed in this case.
\\

\noindent \textbf{Case 2.} The case that $\frac{n+1}{2}\stackrel{4}{\equiv} 2$.

\noindent For each $i$ in $\{1,2,\ldots ,\frac{n+1}{2}\}$, we define $f(v_{i}x)$ and $f(v_{i}y)$ as follows;

$$f(v_{i}x)=\left\{\begin{array}{ccrccccc}
 \frac{5n+3}{2}+i  &    &   \ {\rm if} \ &   1\leq i \leq \frac{n+1}{2}  &  {\rm and}    &   (i\stackrel{4}{\equiv} 1 & {\rm or} & i\stackrel{4}{\equiv} 0)\\
   &    &  &      &  &    &  & \\
 2n+4-2i  &    &   \ {\rm if} \ &   1\leq i \leq \frac{n+1}{2}  &   {\rm and}   &  (i\stackrel{4}{\equiv} 2 & {\rm or} & i\stackrel{4}{\equiv} 3)\\
\end{array}\right.$$
and
$$f(v_{i}y)=\left\{\begin{array}{ccrccccc}
 2n+4-2i  &    &   \ {\rm if} \ &   1\leq i \leq \frac{n+1}{2}  &   {\rm and}   & (i\stackrel{4}{\equiv} 1 & {\rm or} & i\stackrel{4}{\equiv} 0)\\ 
   &    &  &        &   &   &  & \\
   \frac{5n+3}{2}+i  &    &   \ {\rm if} \ &   1\leq i \leq \frac{n+1}{2}  &   {\rm and}   & (i\stackrel{4}{\equiv} 2 & {\rm or} & i\stackrel{4}{\equiv} 3).\\
\end{array}\right.$$

\noindent If $\{i,i+1,i+2,i+3\} \subseteq \left\{1,2, \ldots , \frac{n+1}{2}\right\}$ and 
$i\stackrel{4}{\equiv} 1$, then
\begin{center}
$\sum\limits_{j=i}^{i+3} f(v_{j}x)=\sum\limits_{j=i}^{i+3} f(v_{j}y)$.
\end{center}
Because of $\frac{n+1}{2}\stackrel{4}{\equiv} 2$, we have
\begin{center}
$\sum\limits_{i=1}^{\frac{n+1}{2}} f(v_{i}y)=3+\sum\limits_{i=1}^{\frac{n+1}{2}} f(v_{i}x)$.
\end{center}
By setting the following four assignments
$$\begin{array}{lcr}
 f\left(xv_{\frac{n+3}{2}}\right)=\frac{3n+3}{2}+\frac{n+3}{2},  &   & f(vx)=n+2, \\
   &    &  \\
 f\left(yv_{\frac{n+3}{2}}\right)=
 3n+4-2\left(\frac{n+3}{2}\right),  &    &  f(vy)=n+1,  
\end{array}$$
we obtain
\begin{center}
$f(vx)+\sum\limits_{i=1}^{\frac{n+3}{2}} f(v_{i}x)=f(vy)+\sum\limits_{i=1}^{\frac{n+3}{2}} f(v_{i}y).$
\end{center}
Now, we determine the exact values of
\begin{center}
$f\left(xv_{\frac{n+5}{2}}\right),f\left(yv_{\frac{n+5}{2}}\right),
f\left(xv_{\frac{n+7}{2}}\right),f\left(yv_{\frac{n+7}{2}}\right),
\ldots ,f(xv_{n}),f(yv_{n})$.
\end{center}
Let us regard the following assignments;

$$f(v_{i}x)=\left\{\begin{array}{ccrccccc}
 \frac{3n+3}{2}+i  &    &   \ {\rm if} \ &   \frac{n+5}{2}\leq i \leq n  &  {\rm and}    &   (i\stackrel{4}{\equiv} 0 & {\rm or} & i\stackrel{4}{\equiv} 3)\\
   &    &  &      &  &    &  & \\
 3n+4-2i  &    &   \ {\rm if} \ &   \frac{n+5}{2}\leq i \leq n  &   {\rm and}   &  (i\stackrel{4}{\equiv} 1 & {\rm or} & i\stackrel{4}{\equiv} 2)\\
\end{array}\right.$$
and
$$f(v_{i}y)=\left\{\begin{array}{ccrccccc}
 3n+4-2i  &    &   \ {\rm if} \ &   \frac{n+5}{2}\leq i \leq n  &  {\rm and}    &   (i\stackrel{4}{\equiv} 0 & {\rm or} & i\stackrel{4}{\equiv} 3)\\
   &    &  &      &  &    &  & \\
 \frac{3n+3}{2}+i  &    &   \ {\rm if} \ &   \frac{n+5}{2}\leq i \leq n  &   {\rm and}   &  (i\stackrel{4}{\equiv} 1 & {\rm or} & i\stackrel{4}{\equiv} 2).\\
\end{array}\right.$$
If $i\stackrel{4}{\equiv}0$ and 
$\{i,i+1,i+2,i+3\} \subseteq \left\{\frac{n+5}{2},\frac{n+7}{2}, \ldots ,n\right\}$,
then
\begin{center}
$\sum\limits_{j=i}^{i+3} f(v_{j}x)=\sum\limits_{j=i}^{i+3} f(v_{j}y)$.
\end{center}
Thus, since $\frac{n+5}{2}\stackrel{4}{\equiv} 0$ and
the number of vertices in $\left\{v_{\frac{n+5}{2}},v_{\frac{n+7}{2}}, \ldots ,v_{n}\right\}$
is divisible by $4$, we obtain that
\begin{center}
$\sum\limits_{j=\frac{n+5}{2}}^{n} f(v_{j}x)=\sum\limits_{j=\frac{n+5}{2}}^{n} f(v_{j}y)$.
\end{center}
Accordingly,
\begin{center}
$f(vx)+\sum\limits_{j=1}^{n} f(v_{j}x)=f(vy)+\sum\limits_{j=1}^{n} f(v_{j}y)$;
\end{center}
which is desired in this case.
\\

\noindent \textbf{Case 3.} The case that $\frac{n+1}{2}$ is odd and $\frac{n+3}{2}$ is
divisible by $3$.

\noindent In this case, $\frac{n-1}{2}$ is even. Also, since $\frac{n+3}{2}$ is
divisible by $3$, both of $\frac{n}{3}$ and $\frac{n-3}{6}$ are integers. We define
$$\begin{array}{lcr}
 f\left(v_{1}x\right)=2n+4-2,  &   & f(vx)=n+1, \\
   &    &  \\
 f\left(v_{1}y\right)=\frac{5n+3}{2}+1,  &    &  f(vy)=n+2.  
\end{array}$$
For each $i$ with $2\leq i\leq \frac{n}{3}$, set $f(v_{i}x)$ and $f(v_{i}y)$ as the following;

$$f(v_{i}x)=\left\{\begin{array}{ccrl}
 \frac{5n+3}{2}+i  & \ \  &   \ {\rm if} \ &   2\leq i \leq \frac{n}{3}\  {\rm and}\  i\  {\rm is\ even} \\
   &    &   &  \\
 2n+4-2i  &    \ \  &   \ {\rm if} \ &     2\leq i \leq \frac{n}{3}  \  {\rm and}\  i\  {\rm is\ odd} \\
\end{array}\right.$$
and
$$f(v_{i}y)=\left\{\begin{array}{ccrl}
 2n+4-2i  &    \ \  &   \ {\rm if} \ &   2\leq i \leq \frac{n}{3}  \  {\rm and}\  i\  {\rm is\ even} \\ 
   &    &    & \\
   \frac{5n+3}{2}+i  &    \ \  &   \ {\rm if} \ &     2\leq i \leq \frac{n}{3}  \  {\rm and}\  i\  {\rm is\ odd.} \\
\end{array}\right.$$
\\
It is obvious that if $i$ is an even integer with $2\leq i \leq \frac{n}{3}$,
then
\begin{center}
$f(yv_{i})+f(yv_{i+1})=f(xv_{i})+f(xv_{i+1})+3$.
\end{center}
So,
\begin{center}
$\sum\limits _{i=2}^{\frac{n}{3}} f(yv_{i}) =\frac{n-3}{2} + \sum\limits _{i=2}^{\frac{n}{3}} f(xv_{i}).$
\end{center}
Now, for each $i$ with
$\frac{n+3}{3} \leq i \leq \frac{n+1}{2}$, define $f(v_{i}x)$ and $f(v_{i}y)$ as the following;

$$f(v_{i}x)=\left\{\begin{array}{ccrc}
 2n+4-2i  & \ \  &   \ {\rm if} \ &   \frac{n+3}{3}\leq i \leq \frac{n+1}{2}\  {\rm and}\  i\  {\rm is\ even} \\
   &    &    & \\
 \frac{5n+3}{2}+i  &    \ \  &   \ {\rm if} \ &     \frac{n+3}{3}\leq i \leq \frac{n+1}{2}  \  {\rm and}\  i\  {\rm is\ odd} \\
\end{array}\right.$$
and
$$f(v_{i}y)=\left\{\begin{array}{ccrc}
 \frac{5n+3}{2}+i  &    \ \  &   \ {\rm if} \ &   \frac{n+3}{3}\leq i \leq \frac{n+1}{2}  \  {\rm and}\  i\  {\rm is\ even} \\ 
   &    &   &  \\
   2n+4-2i  &    \ \  &   \ {\rm if} \ &     \frac{n+3}{3}\leq i \leq \frac{n+1}{2}  \  {\rm and}\  i\  {\rm is\ odd.} \\
\end{array}\right.$$
\\
If $i$ is an even integer with $\frac{n+3}{3}\leq i \leq \frac{n+1}{2}$,
then we have
\begin{center}
$f(xv_{i})+f(xv_{i+1})=f(yv_{i})+f(yv_{i+1})+3$.
\end{center}
Therefore,
\begin{center}
$\sum\limits _{i=\frac{n+3}{3}}^{\frac{n+1}{2}} f(xv_{i}) =\frac{n+3}{4} + \sum\limits _{i=\frac{n+3}{3}}^{\frac{n+1}{2}} f(yv_{i}).$
\end{center}
Finally, let us regard the following assignments for $f(v_{i}x)$ and $f(v_{i}y)$
when $i$ is an integer with $\frac{n+3}{2}\leq i \leq n$;

$$f(v_{i}x)=\left\{\begin{array}{ccrc}
 3n+4-2i  & \ \  &   \ {\rm if} \ &   \frac{n+3}{2}\leq i \leq n\  {\rm and}\  i\  {\rm is\ even} \\
   &    &    & \\
 \frac{3n+3}{2}+i  &    \ \  &   \ {\rm if} \ &     \frac{n+3}{2}\leq i \leq n  \  {\rm and}\  i\  {\rm is\ odd} \\
\end{array}\right.$$
and
$$f(v_{i}y)=\left\{\begin{array}{ccrc}
 \frac{3n+3}{2}+i  &    \ \  &   \ {\rm if} \ &   \frac{n+3}{2}\leq i \leq n  \  {\rm and}\  i\  {\rm is\ even} \\ 
   &    &    & \\
   3n+4-2i  &    \ \  &   \ {\rm if} \ &     \frac{n+3}{2}\leq i \leq n  \  {\rm and}\  i\  {\rm is\ odd.} \\
\end{array}\right.$$
\\
Again, for each even integer $i$ with $\frac{n+3}{2}\leq i \leq n$
we have
\begin{center}
$f(xv_{i})+f(xv_{i+1})=f(yv_{i})+f(yv_{i+1})+3$.
\end{center}
Thus,
\begin{center}
$\sum\limits _{i=\frac{n+3}{2}}^{n} f(xv_{i}) =\frac{3(n-1)}{4} + \sum\limits _{i=\frac{n+3}{2}}^{n} f(yv_{i}).$
\end{center}
We conclude that
\begin{center}
$f(vx)+\sum\limits_{i=1}^{n} f(v_{i}x)=f(vy)+\sum\limits_{i=1}^{n} f(v_{i}y)$;
\end{center}
which completes the proof in this case.
\\

\noindent \textbf{Case 4.} The case that $\frac{n+1}{2}$ is odd and $\frac{n-1}{2}$ is
divisible by $3$.

\noindent In this case, we define
$$\begin{array}{lcr}
 f\left(v_{1}x\right)=2n+4-2,  &   & f(vx)=n+2, \\
   &    &  \\
 f\left(v_{1}y\right)=\frac{5n+3}{2}+1,  &    &  f(vy)=n+1.  
\end{array}$$
For each $i$ with $2\leq i\leq \frac{n+2}{3}$, put $f(v_{i}x)$ and $f(v_{i}y)$ as the following;

$$f(v_{i}x)=\left\{\begin{array}{ccrl}
 \frac{5n+3}{2}+i  & \ \  &   \ {\rm if} \ &   2\leq i \leq \frac{n+2}{3}\  {\rm and}\  i\  {\rm is\ even} \\
   &    &    & \\
 2n+4-2i  &    \ \  &   \ {\rm if} \ &     2\leq i \leq \frac{n+2}{3}  \  {\rm and}\  i\  {\rm is\ odd} \\
\end{array}\right.$$
and
$$f(v_{i}y)=\left\{\begin{array}{ccrl}
 2n+4-2i  &    \ \  &   \ {\rm if} \ &   2\leq i \leq \frac{n+2}{3}  \  {\rm and}\  i\  {\rm is\ even} \\ 
   &    &   &  \\
   \frac{5n+3}{2}+i  &    \ \  &   \ {\rm if} \ &     2\leq i \leq \frac{n+2}{3}  \  {\rm and}\  i\  {\rm is\ odd.} \\
\end{array}\right.$$
\\
For each even integer $i$ with $2\leq i \leq \frac{n+2}{3}$,
the following equality holds;
\begin{center}
$f(yv_{i})+f(yv_{i+1})=f(xv_{i})+f(xv_{i+1})+3$.
\end{center}
This implies that
\begin{center}
$\sum\limits _{i=2}^{\frac{n+2}{3}} f(yv_{i}) =\frac{n-1}{2} + \sum\limits _{i=2}^{\frac{n+2}{3}} f(xv_{i}).$
\end{center}
For each $i$ with
$\frac{n+5}{3} \leq i \leq \frac{n+1}{2}$, we define $f(v_{i}x)$ and $f(v_{i}y)$ as follows;

$$f(v_{i}x)=\left\{\begin{array}{ccrc}
 2n+4-2i  & \ \  &   \ {\rm if} \ &   \frac{n+5}{3} \leq i \leq \frac{n+1}{2}\  {\rm and}\  i\  {\rm is\ even} \\
   &    &    & \\
 \frac{5n+3}{2}+i  &    \ \  &   \ {\rm if} \ &     \frac{n+5}{3} \leq i \leq \frac{n+1}{2}  \  {\rm and}\  i\  {\rm is\ odd} \\
\end{array}\right.$$
and
$$f(v_{i}y)=\left\{\begin{array}{ccrc}
 \frac{5n+3}{2}+i  &    \ \  &   \ {\rm if} \ &   \frac{n+5}{3} \leq i \leq \frac{n+1}{2}  \  {\rm and}\  i\  {\rm is\ even} \\ 
   &    &    & \\
   2n+4-2i  &    \ \  &   \ {\rm if} \ &     \frac{n+5}{3} \leq i \leq \frac{n+1}{2}  \  {\rm and}\  i\  {\rm is\ odd.} \\
\end{array}\right.$$
Now, for each even integer $i$ that $\frac{n+5}{3} \leq i \leq \frac{n+1}{2}$
we have
\begin{center}
$f(xv_{i})+f(xv_{i+1})=f(yv_{i})+f(yv_{i+1})+3$.
\end{center}
So, we obtain
\begin{center}
$\sum\limits _{i=\frac{n+5}{3}}^{\frac{n+1}{2}} f(xv_{i}) =\frac{n-1}{4} + \sum\limits _{i=\frac{n+5}{3}}^{\frac{n+1}{2}} f(yv_{i}).$
\end{center}
Now, it is time to determine $f(v_{i}x)$ and $f(v_{i}y)$
for those integers $i$ that $\frac{n+3}{2}\leq i \leq n$.
Let us assign

$$f(v_{i}x)=\left\{\begin{array}{ccrc}
 3n+4-2i  & \ \  &   \ {\rm if} \ &   \frac{n+3}{2}\leq i \leq n\  {\rm and}\  i\  {\rm is\ even} \\
   &    &    & \\
 \frac{3n+3}{2}+i  &    \ \  &   \ {\rm if} \ &     \frac{n+3}{2}\leq i \leq n  \  {\rm and}\  i\  {\rm is\ odd} \\
\end{array}\right.$$
and
$$f(v_{i}y)=\left\{\begin{array}{ccrc}
 \frac{3n+3}{2}+i  &    \ \  &   \ {\rm if} \ &   \frac{n+3}{2}\leq i \leq n  \  {\rm and}\  i\  {\rm is\ even} \\ 
   &    &    & \\
   3n+4-2i  &    \ \  &   \ {\rm if} \ &     \frac{n+3}{2}\leq i \leq n  \  {\rm and}\  i\  {\rm is\ odd.} \\
\end{array}\right.$$
\\
Since the equality
$f(xv_{i})+f(xv_{i+1})=f(yv_{i})+f(yv_{i+1})+3$
holds
for each even integer $i$ that $\frac{n+3}{2}\leq i \leq n$,
we have
\begin{center}
$\sum\limits _{i=\frac{n+3}{2}}^{n} f(xv_{i}) =\frac{3(n-1)}{4} + \sum\limits _{i=\frac{n+3}{2}}^{n} f(yv_{i}).$
\end{center}
Accordingly,
\begin{center}
$f(vx)+\sum\limits_{i=1}^{n} f(v_{i}x)=f(vy)+\sum\limits_{i=1}^{n} f(v_{i}y)$;
\end{center}
and therefore, the proof is completed in the final case.
}
\end{proof}

\end{document}